\DeclarePairedDelimiterX{\bracket}[3]{#1}{#2}{#3}
\newcommand{\abs}[1]{\bracket*{\lvert}{\rvert}{#1}}
\DeclarePairedDelimiterXPP\prob[1]{\mathbb{P}}{\lbrace}{\rbrace}{}{#1} 
\DeclarePairedDelimiterXPP\probability[2]{\mathbb{P}_{#1}}{\lbrace}{\rbrace}{}{#2} 
\DeclarePairedDelimiterXPP\expectation[1]{\mathbb{E}}{\lbrack}{\rbrack}{}{#1} 
\newcommand{\E}[1]{\expectation*{#1}} 
\DeclarePairedDelimiterXPP\expectationdist[2]{\mathbb{E}_{#1}}{\lbrack}{\rbrack}{}{#2} 
\DeclarePairedDelimiterXPP\variance[1]{\mathrm{Var}}{\lbrack}{\rbrack}{}{#1} 
\newcommand{\Var}[1]{\variance*{#1}} 
\DeclarePairedDelimiterXPP\variancedist[2]{\mathrm{Var}_{#1}}{\lbrack}{\rbrack}{}{#2} 
\DeclarePairedDelimiterXPP\covariance[2]{\mathrm{Cov}}{(}{)}{}{#1,\mathopen{}#2} 
\DeclarePairedDelimiterX\Set[1]\{\}{
	
	#1
}
  \newenvironment{acknowledgement}{%
    \titlepage
    \null\vfil
    \@beginparpenalty\@lowpenalty
    \begin{center}%
      \bfseries \ackname
      \@endparpenalty\@M
    \end{center}}%
  {\par\vfil\null\endtitlepage}
\newtheorem{theorem}{Theorem}[section]
\newtheorem{lemma}[theorem]{Lemma}
\newtheorem{remark}[theorem]{Remark}
\newtheorem{example}[theorem]{Example}
\title{Central limit theorem in complete feedback games}
\author{Andrea Ottolini}
\address[]{Department of Mathematics, University of Washington, Seattle, WA 98195, USA}
\email{ottolini@uw.edu}
\author{Raghavendra Tripathi}
\address[]{Department of Mathematics, University of Washington, Seattle, WA 98195, USA}
\email{raghavt@uw.edu}
\begin{document}

\begin{abstract}
    Consider a well-shuffled deck of cards of $n$ different types where each type occurs $m$ times. In a complete feedback game, a player is asked to guess the top card from the deck. After each guess, the top card is revealed to the player and is removed from the deck. The total number of correct guesses in a complete feedback game has attracted significant interest in last few decades. Under different regimes of $m, n$, the expected number of correct guesses, under the greedy (optimal) strategy, has been obtained by various authors, while there are not many results available about the fluctuations. In this paper, we establish a central limit theorem with Berry-Esseen bounds when $m$ is fixed and $n$ is large. Our results extend to the case of decks where different types may have different multiplicity, under suitable assumptions.
\end{abstract}
\maketitle
\section{Introduction}\label{sec:Intro}
\subsection{Lady tasting tea, revisited.} Muriel Bristol, a biologist at Rothamsted Research at the dawn of last century, once claimed that she could taste whether a cup of tea was prepared by pouring milk first. Ronald Fisher, in an attempt to disprove her claim, arranged the following simple experiment: Bristol was presented with eight cups of tea, half of which prepared by pouring milk first, and she was asked to taste them one by one and identify the correct ones. This episode became widely known as the ``lady tasting tea" experiment, the very first example appearing in Fisher's seminal book~\cite{fisher1936design} on the design of statistical experiments. Its analysis of the experiment is now a cornerstone of scientific thinking, being also the first appearance of the expression ``null-hypothesis" in Fisher's work. In this case, it assumes that Bristol's guesses are random, so that that the distribution of the score follows an hypergeometric distribution. In average, one expects four correct guesses, the underlying probability distribution being well-understood.
\\ \\
In the original experiment, Bristol did not receive any kind of feedback during the experiment: what if she was told the correct answer after each attempt? Clearly, since she knows the exact number of cups of each type, she can always guess the one that appeared the smallest number of times so far, and therefore increase her likelihood of a correct guess at each step. This does not require any special ability on her side, other than a clever exploitation of the information she is provided with. There has been a substantial flurry of interest in variations of this kind, owing to the connection with randomized clinical trials~\cite{Blackwell1957} and the testing of claims of extra-sensory perceptions~\cite{diaconis1978statistical}, which we will review later. While most of the focus has been on the asymptotic expected score for large experiments, it is clear that a rigorous analysis of the experiments requires the understanding of the fluctuations of the score. This is the focus of our paper. 
\subsection{Model and main result}\label{subs:MainResult}
Consider a well-shuffled deck of card consisting of $n$ different types of cards where each card appears $m$ times. Thus there are $mn$ number of cards in total. Consider the following \emph{complete feedback game}: a player is asked to guess the type of the card appearing on top of the deck. After each guess the top card is revealed to the player. The game continues until the deck is exhausted. Let $S_{m,n}$ denote the total number of correct guess (also referred to as the score) at the end of the game. Obviously the score depends on the strategy. For instance, if the player keeps guessing, say card of type $1$, then $S_{m, n}=m$.  It is shown in~\cite{DG81} that the the greedy algorithm maximizes the expected number of correct guesses, that is to say, a player should guess a card that has the maximum multiplicity in the remaining deck. We will refer to the greedy algorithm as the `optimal strategy' throughout this paper, and we will tacitly assume that the player is performing this strategy. Our main result is a central limit theorem (CLT) for the optimal score $S_{m, n}$ with a Berry-Esseen bound that can be stated as follows. 
\begin{theorem}
\label{thm:CLT_prelim}
Consider a deck of card with $n$ distinct type of cards where each card appears with a fixed multiplicity $m$. Let $S_{n}$ be the total number of correct guesses under the greedy/optimal strategy. Then:
\begin{itemize}
    \item The mean $\mu_{n}:=\E{S_{n}}$ and the variance $\sigma_{n}^2\coloneqq \Var{S_{n}}$ satisfy 
    \begin{align*}
        \mu_{n}\sim \sigma^2_{n}\sim \left(1+\frac{1}{2}+\ldots+\frac{1}{m}\right)\ln n.
    \end{align*}
    as $n\rightarrow +\infty$.
    \item There exists a constant $C=C(m)$ depending only on $m$ such that
    \begin{align*}
    \sup_{x\in\mathbb R}\left|\mathbb P\left(\frac{S_{n}-\mu_n}{\sigma_n}\leq x\right)-\Phi(x)\right|\leq C\,\frac{\ln\ln n}{\sqrt{\ln n}}
    \end{align*}
where $\Phi$ is the cumulative distribution function of a standard normal random variable.
\end{itemize}
\end{theorem}

More generally, we prove a CLT result analogous to Theorem~\ref{thm:CLT_prelim} for deck of cards where the cards of each type occur with (possibly) different multiplicity. That is, for a deck with $n$ different types of cards where the cards of type $i$ for $i\in [n]$ appears $m_i$ times.  We always assume that, for all $n$, the deck is shuffled so that all arrangements of the decks are equally likely. 
\paragraph{Notations}
To state our theorem clearly, we need to fix some notations. 
\begin{enumerate}
    \item For each $n\in \mathbb{N}$, a vector ${\bf m}:= {\bf m}^{n}=(m_1, \ldots, m_n)$ denotes a deck of cards with $n$ different types of card and where a card of type $i\in [n]$ appears $m_i$ times in the deck. 
    \item Denote by $\abs{\bf{m}}=\abs{{\bf m}^{n}}$ the total number of cards in the deck, that is, $\abs{\bf{m}}=\sum_{i=1}^{n}m_i$.
    \item Let ${\bf m}^{n}_{\mathrm{max}}$ denote the highest multiplicity of a card in the deck. That is, ${\bf m}^{n}_{\mathrm{max}}=\max_{i\in [n]} m_i$.
    \item Let $\epsilon_{n}$ denote the fraction of type(s) $i$ such that the cards of type $i$ occur with highest multiplicity ${\bf m}^{n}_{\mathrm{max}}$.
    \item Let $S_{{\bf m}^{n}}$ be the total number of correct guesses (also referred to as the score) at the end of the game under the optimal strategy.
\end{enumerate}


\begin{theorem}
\label{thm:CLT}
Let ${\bf m}^{n}$ be a sequence of decks, indexed by $n$, with $n$-distinct types of cards. Suppose that ${\bf m}^{n}_{\mathrm{max}}\leq m$ for some $m$ that is independent of $n$, and that $\epsilon_{n}\geq \epsilon$ for some positive $\epsilon$ independent of $n$. Let $S_{{\bf m}^n}$ be the total number of correct guesses under the greedy/optimal strategy. Then:
\begin{itemize}
    \item The mean $\mu_n:=\E{S_{{\bf m}^{n}}}$ and the variance $\sigma_n^2\coloneqq \Var{S_{{\bf m}^{n}}}$ satisfy 
    \begin{align*}
        \mu_n\sim \sigma^2_n\sim \left(1+\frac{1}{2}+\ldots+\frac{1}{{\bf m}^{n}_{\mathrm{max}}}\right)\ln n.
    \end{align*}
    as $n\rightarrow +\infty$.
    \item There exists a constant $C=C(\epsilon, m)$ such that
    \begin{align*}
    \sup_{x\in\mathbb R}\left|\mathbb P\left(\frac{S_{{\bf m}^{n}}-\mu_n}{\sigma_n}\leq x\right)-\Phi(x)\right|\leq C\,\frac{\ln\ln n}{\sqrt{\ln n}}
    \end{align*}
where $\Phi$ is the cumulative distribution function of a standard normal random variable.
\end{itemize}
\end{theorem}
\begin{remark}
It is clear that all the assumptions in the Theorem~\ref{thm:CLT} are satisfied if $m_i=m$ for all $i\in [n]$. In particular,  Theorem~\ref{thm:CLT_prelim} follows from Theorem~\ref{thm:CLT}. The result about the mean was already shown in~\cite{DG81, he2021card}.
\end{remark}

\subsection{Related literature}\label{subsec:Lit}
The complete feedback game was originally motivated by clinical trials. For an in-depth discussion about the problem, a good reference is~\cite{efron1971forcing}, though the first appearance is in a work by Blackwell and Hodges~\cite{Blackwell1957}. They considered the case where two types of treatments have to be assigned to a fixed number of people, say $2m$, who arrive one by one at the clinic. They were interested in the case where both treatments are provided in the same quantity and in a random order. However, they assume that the hospital may decide, to their discretion, whether to rule out some of the subjects because of their medical conditions. Since they have information on the treatments provided up to that point, they may decide to bias the result of the experiment toward a specific treatment. This can be done by ruling out a particularly sick subject if they know that it is more likely that their favorable treatment has to appear next. \\ \\
In our language, this is precisely the complete feedback case with $n=2$, with ${\bf m} =(m,m)$. The authors in~\cite{Blackwell1957} gave an asymptotic formula for the optimal expected score (in their language, the selection bias), which was then extended by~\cite{DG81} to the generic case ${\bf m}=(m_1, \ldots, m_n)$ with $n$ fixed. As for the fluctuations, the latter reference shows that, for $n=2$ and the ${\bf m}=(m,m)$ with $m$ large, the limiting optimal score satisfies a central limit theorem. On the other hand, in the unbalanced case where ${\bf m}=(m_1, m_2)$ where $m_1, m_2$ grow with $m_1/m_2\rightarrow p\neq 1/2$, they show that the fluctuations of the optimal score are \emph{not Gaussian}. Related results in the case $m=2$ also appeared in~\cite{kuba2023card}.\\ \\
Another occurrence of the complete feedback game is related to the rigorous analysis for extra-sensory perception claims. In fact, one of the most celebrated experiment in this direction corresponds precisely to the complete feedback game with a deck of twenty-five cards (Zener cards), with five symbols each appearing five times. For an historical account, the interested reader is referred to~\cite{diaconis1978statistical}. Motivated by this, in~\cite{DG81} the authors suggest to study the asymptotic optimal expected score for the complete feedback game with decks ${\bf m}=(m_1, \ldots, m_n)$ where $n$ grows. In the case $m_i\equiv 1$, the analysis becomes much simpler since the sequence of guesses become independent. In particular, it is easy to deduce that one obtains about $\ln n$ correct guesses in expectation, with a variance of the same order and normal fluctuations.\\ \\
The case where some of the $m_i$ are greater than one is more subtle, since the chance of a correct guess will depend on the history of the draws up to that moment. In~\cite{diaconis_partialfeedback}, the authors analyzed the case where $m_i\equiv m$ is fixed and $n$ grows to infinity, showing that asymptotically the expected optimal score is $\left(1+\ldots+ 1/m\right)\ln n$. The result was substantially refined by the first named author and He in~\cite{he2021card}, where the expected score is determined for decks ${\bf m}=(m_1,\ldots, m_n)$ under the same assumptions of Theorem~\ref{thm:CLT}. Moreover, their asymptotic result matches the optimal expected score up to an explicit error that goes to zero. Their main tool is the analysis of a certain variation of the birthday problem via Stein's methods, which will be our main tool here as well. In the case $m_i\equiv m$ where both $m$ and $n$ are growing, the asymptotic for the expected optimal score was obtained by the first named author and Steinerberger in~\cite{ottolini2022guessing}, covering a variety of regimes that include the case $n=m$ (the original Zener's setting).
\\ \\
Variations of the game also include different types of feedback. The most relevant case being that of a yes/no feedback (i.e., the card is shown only when a guess is correct). This becomes much harder to analyze even for balanced decks ${\bf m}=(m_1,\ldots, m_n)$ with $m_i\equiv m$.  It is known~\cite{DG81} that the optimal strategy is \emph{not the greedy one} as soon as $m>1$ and $n>2$. Some limiting results were recently obtained in~\cite{diaconis_partialfeedback, nie2022number}. For instance, that the expected optimal score for $n\gg m \gg 1$ is of the form $m+\Theta(\sqrt m)$ uniformly in $n$. Results on the fluctuations are currently unknown -- except in the case $m=1$ -- where the limiting distribution has a non-normal behavior as shown in~\cite{DG81}. Since the optimal strategy is rather hard to implement, a fact ultimately due to its connection with permanents~\cite{chung1981permanents, diaconis2001statistical}, there has also been some interest in near-optimal strategies that are easier to implement~\cite{diaconis2022guessing}. 
\\ \\
Finally, we mention some other variations of the game on a similar flavour. The problem of minimizing the expected number of correct guesses was also addressed in~\cite{DG81, he2021card, diaconis_partialfeedback}-- for both the complete feedback and the yes/no feedback. Another natural set of questions comes from considering decks that have not been properly shuffled, such as the case of a deck which has been riffle shuffled~\cite{liu2021card}.

\section{Proofs}\label{sec:proof}
In the remainder of the paper, we will drop at times the dependence on the deck $\bf m$ and on $n$. The implicit constants in the notation $O, \Omega, \Theta, \lesssim$ will depend on $m$ and $\epsilon$ only, unless we specify otherwise. We will often identify ${\bf m}^{n}_{\mathrm{max}}$ and $\epsilon_{n}$ with their upper/lower bound $m$ or $\epsilon$, unless there is ambiguity.
\subsection{Main idea}
It will be convenient to define the following random variables and setup some notations.
\begin{itemize}
    \item $T_j=\max\left\{t\in \{0,1,
    \ldots, \abs{\bf{m}}\right\}:\text{No card among the \emph{last} $t$ appear more than $j$ times}\}$. Here $0\leq j\leq m$, with the convention $T_0=0$ and $T_m=\abs{\bf{m}}$. 
    \item $W_{j,t}=\sum_{{\bf t}\leq t} Y_{\bf t}$, where for each index $\bf t=(t_1,\ldots, t_{j+1})$ the notation ${\bf t}\leq t$ means ${\bf t}_{\ell}\leq t$ for each $\ell=1, 2,\ldots, j+1$ and the binary random variable $Y_{\bf t}$ is one if and only if the cards at positions $\bf t$ are equal (here, positions are considered from the \emph{bottom} of the deck). Notice that $T_j>t$ if and only if $W_{j,t}=0$. Here, $1\leq t\leq \abs{\bf{m}}$ and $0\leq j\leq m$.
    \item $\widetilde W_j=W_{j-1, T_j}$ denotes the number of cards that appear $j$ times before some card appear $j+1$ times. Again, this is done from the \emph{bottom} of the deck. Here, $1\leq j\leq m$.
\end{itemize}
\begin{example}
Consider a deck ${\bf m}=(3,3,2)$, and assume that the sequence of cards extracted, \emph{listed from the last to the first}, is 
\begin{align*}
    (1, 2, 2, 1, 3, 1, 2, 3).
\end{align*}
In this case, $T_0=0,\, T_1=2, \,T_2=5,\, T_3=8$. Correspondingly, we have $\widetilde W_1=W_{0, 2}=2,\, \widetilde W_2=W_{1, 5}=2,\, \widetilde W_3=W_{2, 8}=2$. For instance, $\widetilde W_2=2$ reflects the fact that there are two couple of identical cards among the last five and no triple of identical cards (those labeled one and two), while among the last six cards there is a triple of identical cards (those labeled one).  
\end{example}
\begin{remark}\label{rem:mult}
Notice that $\widetilde W_m$ is equal to the number of types that appear with multiplicity $m$, which is at least $\epsilon n$ under the assumption of Theorem~\ref{thm:CLT}.
\end{remark}
\begin{remark}\label{rem: atleastpolynomial}
Since $W_{j,t}$ is a sum of indicators, $W_{j,t}\leq W_{j, \abs{\bf{m}}}\leq \binom{nm}{j}$ is at most polynomial in $n$ under the assumption of Theorem~\ref{thm:CLT}.
\end{remark}
The random variables $W_{j, t}$ are important tools in understanding the asymptotic behaviour of total score. In fact, the main ingredient in~\cite{he2021card} is an asymptotic result for the $W_{t, j}$, which behave like Poisson random variables with suitable parameters. This should come as no surprise, since the $Y_{\bf t}$s are indicator of rare events, most of which are weakly dependent. They obtain the following result.
\begin{theorem}[Theorem 1.8~\cite{he2021card}]
\label{thm:poissonapp}
Let $1\leq j<m$. Then, there exists $\lambda=\Theta\left(\frac{t^{j+1}}{n^j}\right)$ such that
\begin{align*}
    d_{tv}(W_{j,t}, Poi(\lambda))\lesssim \frac{t}{n}\lesssim \left(\frac{\lambda}{n}\right)^{\frac{1}{j+1}}
\end{align*}
Here, $Poi(\lambda)$ represents a Poisson random variable with mean $\lambda$, and $d_{tv}$ represents the total variation distance between probability measures (where we identify a random variable with its law).
Moreover, the implicit constants in the error term and the definition of $\lambda$ can be chosen to depend only on $j$ and $\epsilon$, the fraction of types that appear with multiplicity $m$. 
\end{theorem}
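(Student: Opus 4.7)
I would prove this by the Chen--Stein method of Poisson approximation, exploiting that $W_{j,t} = \sum_{{\bf t}\le t} Y_{\bf t}$ is a sum of locally dependent $\{0,1\}$-valued indicators, each with small mean.

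First, I would compute $\lambda$ directly. For any $(j+1)$-tuple ${\bf t}$ of distinct positions among the bottom $t$, the probability that the cards at ${\bf t}$ all agree is
\[
p_{\bf t} := \mathbb{P}(Y_{\bf t}=1) = \sum_{i=1}^n \frac{m_i(m_i-1)\cdots(m_i-j)}{\abs{\bf{m}}(\abs{\bf{m}}-1)\cdots(\abs{\bf{m}}-j)},
\]
which under the hypotheses $m_i\le m$ and $\abs{\bf{m}} = \Theta(n)$ (guaranteed by the $\epsilon n$ types of maximal multiplicity) is $\Theta(1/n^j)$ uniformly in ${\bf t}$, with constant depending only on $j$, $m$, and $\epsilon$. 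Summing over the $\Theta(t^{j+1})$ admissible tuples gives $\lambda := \mathbb{E}[W_{j,t}] = \Theta(t^{j+1}/n^j)$.

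Next, I would apply Chen--Stein with local dependence neighborhoods $N_{\bf t}:=\{{\bf s}: {\bf s}\cap {\bf t}\ne \emptyset\}$, outside of which $Y_{\bf t}$ is only weakly dependent on $\{Y_{\bf s}\}$. The standard bound reads $d_{tv}(W_{j,t}, P_\lambda)\le b_1+b_2+b_3$ with $b_1 = \sum_{\bf t}\sum_{{\bf s}\in N_{\bf t}} p_{\bf t}p_{\bf s}$ and $b_2 = \sum_{\bf t}\sum_{{\bf s}\in N_{\bf t}\setminus\{\bf t\}}\mathbb{P}(Y_{\bf t}=Y_{\bf s}=1)$. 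Using $|N_{\bf t}| = O(t^j)$ gives $b_1 = O(t^{2j+1}/n^{2j})$; for $b_2$, splitting by the number $k\ge 1$ of shared positions shows that the joint event pins $2(j+1)-k$ positions to a common type and contributes at most $O(t^{2j+2-k}/n^{2j+1-k})$. In the Poisson regime $\lambda\asymp 1$, i.e.\ $t\asymp n^{j/(j+1)}$, each of these terms is $O(t/n)$, matching the target.

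The main obstacle will be the residual term $b_3$, which captures the weak dependence of $Y_{\bf t}$ on the $\sigma$-algebra generated by $\{Y_{\bf s}: {\bf s}\cap {\bf t}=\emptyset\}$ -- nonzero precisely because the deck is drawn without replacement. To control it, I would observe that conditioning on the types at positions outside ${\bf t}$ changes each multiplicity $m_i$ by at most $j+1 = O(1)$, so the conditional probability of $Y_{\bf t}=1$ deviates from $p_{\bf t}$ by $O(1/n)$ uniformly; summing over ${\bf t}$ yields $b_3 = O(\lambda/n) = O((t/n)^{j+1})$, of smaller order than $t/n$ in the relevant regime. An alternative is to couple the uniform arrangement to an i.i.d.\ sample from the type distribution and absorb the coupling cost of $O(1/n)$ per position; ensuring that the implicit constants depend only on $m$ and $\epsilon$ is where the bookkeeping becomes most delicate.
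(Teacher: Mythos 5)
This statement is quoted verbatim from \cite{he2021card} (their Theorem 1.8) and is not proved in the present paper, so there is no internal proof to compare against; your Chen--Stein plan is, however, the same general route (Stein's method for Poisson approximation) as the cited source, and your computation of $\lambda=\mathbb{E}[W_{j,t}]=\Theta(t^{j+1}/n^j)$ is correct. Two points in your outline are genuine gaps. First, you only verify $b_1,b_2\lesssim t/n$ ``in the Poisson regime $\lambda\asymp 1$,'' but the theorem must hold for general $t$ and is in fact invoked in this paper at $t_{\pm}=Cn^{j/(j+1)}f_n$ with $f_n\to\infty$, where $\lambda\to\infty$; there your bound $b_1=O(t^{2j+1}/n^{2j})$ exceeds $t/n$ (e.g.\ $j=1$, $t=\sqrt{n}\ln n$ gives $b_1\asymp(\ln n)^3/\sqrt n$ versus $t/n\asymp\ln n/\sqrt n$). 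The fix is standard but must be said: use the Arratia--Goldstein--Gordon bound with the prefactor $\min(1,\lambda^{-1})$, after which $b_1/\lambda=O((t/n)^j)$ and $b_2/\lambda=O(\sum_{k=1}^{j}(t/n)^{j+1-k})$ are both $O(t/n)$ for all $t\le n$.

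Second, and more seriously, your treatment of $b_3$ does not hold up. The $\sigma$-algebra $\sigma(Y_{\bf s}:{\bf s}\cap{\bf t}=\emptyset)$ is dominated (via Jensen) by conditioning on the types at \emph{all} of the $t-j-1$ positions of $\{1,\dots,t\}\setminus{\bf t}$, not on $O(1)$ of them; since $t-j-1=\Theta(n^{j/(j+1)})$ or larger, the multiplicities change by a total of $\Theta(t)$, not by ``at most $j+1$.'' A correct computation gives a per-tuple deviation $O(p_{\bf t}\,t/n)$, hence $b_3=O(\lambda t/n)$ --- not your claimed $O(\lambda/n)$, whose derivation is also internally inconsistent ($O(1/n)$ per tuple summed over $\Theta(t^{j+1})$ tuples is $O(t^{j+1}/n)$, not $O(t^{j+1}/n^{j+1})$). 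The bound $b_3=O(\lambda t/n)$ still closes once the $\min(1,\lambda^{-1})$ prefactor is in place, so the argument is salvageable, but as written it is not. Your fallback of coupling to an i.i.d.\ sample at cost $O(1/n)$ per position fails outright: over $t\asymp n^{j/(j+1)}$ positions the coupling cost is $\Theta(t^2/n)$, which is not $o(1)$ for any $j\ge 1$. This difficulty with $b_3$ is precisely why the proof in \cite{he2021card} runs Stein's method through a size-bias coupling (forcing a uniformly chosen tuple to be monochromatic by swapping cards) rather than through the local-dependence decomposition you propose.
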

\begin{remark}
In the case $m_i\equiv m$, one has $\lambda=\frac{t^{j+1}}{n^j}\frac{\binom{m}{j+1}}{m^{j+1}}$. Notice that, for fixed $j$, the second term converges to $\frac{1}{(j+1)!}$ as $m\rightarrow +\infty$.
\end{remark}
\begin{remark}
Theorem~\ref{thm:poissonapp} can be thought as a variant of the classical birthday problem. In fact, for $j=1$ and $m_i\equiv {\bf m}^{n}_{\mathrm{max}}\rightarrow +\infty$, we obtain the best possible approximation for the classical birthday problem.
\end{remark}
In this section we state and prove some lemmas that will be useful in the proof of Theorem~\ref{thm:CLT}. We begin with a discussion of the idea of the proof. The total score $S_{\bf m}$ can be written as the sum of $\abs{\bf{m}}$ (the total size of the deck) binary random variables, namely, the indicators that at a given time the player obtains a correct guess. Were these random variables independent, the CLT for the total score would follow at once. \\ 

However, even in the case $m=2, n=2$, it is clear that if the second guess is correct, then the remaining two cards are distinct (hence the third guess will be correct with probability $1/2$) while if the second guess is wrong, then the remaining two cards are the same. Hence the third guess will be correct with probability $1$. Intuitively, the dependence becomes weak for large $n$, and it should be related to the concentration properties of the random variables introduced above. Indeed, the strategy will change depending on how many card appear with a given multiplicity at a given time. The first crucial step towards the proof of CLT will be the observation that, conditioned on $\widetilde{W}_j$, the score can be written as a sum of independent random variables (see Lemma~\ref{lem:TotalScore}). This allows us to prove a CLT for the conditional score with a uniform Berry-Essen bound (Lemma~\ref{lem:berryEssen}). 

The final issue is to understand the behavior of $\widetilde W_j$s and show that they enjoy suitable concentration. The main difficulty is that Theorem~\ref{thm:poissonapp} requires a \emph{fixed} time, rather than the random time $T_j$ appearing in the definition of the $\widetilde W_j$. Moreover, the random variables $T_j$ and $W_{j-1, t}$ are dependent meaning that we cannot expect a straightforward limiting result expressed in terms of a compound Poisson random variables. It is worth noticing that, while in~\cite{he2021card} there is a multivariate version of Theorem~\ref{thm:poissonapp}, it does not suffice for our purposes. To circumvent the problem, we will use a suitable monotonicity and concentration argument, which allows to prove the main Lemma~\ref{lem:finalReduction}.
\subsection{A CLT for the conditional score}
We start by showing a useful representation of the optimal score. 
\begin{lemma}
\label{lem:TotalScore}
For any deck ${\bf m}$, the optimal score $S_{\bf m}$ can be written as
\begin{align*}
    S_{\mathbf{m}}=\sum_{j=1}^{m}\sum_{s=1}^{\widetilde W_j}X_{j,s}, 
\end{align*}
where the $X_{j,s}$ are conditionally independent -- given the $\widetilde W_j$'s -- Bernoulli random variables with $\mathbb P(X_{j,s}=1)=\frac{1}{s}$. 
\end{lemma}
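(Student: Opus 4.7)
The plan is to partition the game into \emph{phases}, indexed by the maximum multiplicity currently present in the remaining deck, and to pinpoint within each phase exactly when the conditional probability of a correct guess takes each value $1/s$. Labelling positions from the bottom as in the definitions, let $p$ be the size of the remaining deck, so the game proceeds by decreasing $p$ from $\abs{\bf{m}}$ down to $0$. The maximum multiplicity of the cards still in the deck equals $j$ precisely for $p\in\{T_{j-1}+1,\ldots, T_j\}$, which I will call \emph{phase $j$}; at the start of phase $j$ (that is, at $p=T_j$) the number of types tied at multiplicity $j$ is exactly $\widetilde W_j$, directly from the definition $\widetilde W_j=W_{j-1,T_j}$ together with the fact that for $p\leq T_j$ every type has multiplicity at most $j$.

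Within phase $j$ I will distinguish a \emph{max-mult draw}, where the removed card belongs to a type currently at multiplicity $j$, from a \emph{low-mult draw}, where it does not. A max-mult draw lowers the tie count $s$ by one, whereas a low-mult draw leaves it unchanged; since $s$ starts phase $j$ at $\widetilde W_j$ and must reach $0$ by the time $p=T_{j-1}$ (when the maximum multiplicity has dropped to $j-1$), phase $j$ contains exactly $\widetilde W_j$ max-mult draws, realizing the values $s=\widetilde W_j,\widetilde W_j-1,\ldots,1$ in this order. Under the greedy strategy with uniform tie-breaking, a low-mult draw is never correct, while at a max-mult draw the player's uniform pick among the $s$ tied types matches the drawn card with probability $1/s$, using fresh randomness each time. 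Letting $X_{j,s}$ denote the indicator of a correct guess at the unique max-mult draw of phase $j$ with $s$ tied types then yields
\begin{equation*}
S_{\bf m}=\sum_{j=1}^{m}\sum_{s=1}^{\widetilde W_j}X_{j,s}.
\end{equation*}

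Conditional on the full realization of the deck, the $X_{j,s}$'s are independent Bernoulli$(1/s)$ variables, because each is determined by an independent uniform tie-break at its own time step; and since this conditional law depends on the deck only through $(\widetilde W_j)_{j=1}^m$, conditioning on $(\widetilde W_j)$ alone still delivers the stated independence and marginals. The one step genuinely requiring care is the combinatorial bookkeeping inside a phase, namely the claim that the tie counts $s=1,2,\ldots,\widetilde W_j$ appear exactly once each in phase $j$ with no repetition or omission. This is immediate once one observes that multiplicities can only decrease as cards are removed, so a type that leaves the max-mult pool cannot re-enter it within the same phase, and conversely no new type can climb into the max-mult pool while the maximum is still $j$.
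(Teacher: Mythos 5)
Your proposal is correct and follows essentially the same route as the paper: your ``max-mult draw of phase $j$ with $s$ tied types'' is exactly the paper's stopping time $\tau_{j,s}$, and the $1/s$ marginal and conditional independence are obtained the same way via uniform tie-breaking. You simply spell out in more detail the bookkeeping (that the tie counts $s=\widetilde W_j,\ldots,1$ each occur exactly once per phase) that the paper leaves implicit.
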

\begin{proof}
Let $\tau_{j, s}$ be the time at which the maximum multiplicity of a symbol left in the deck is equal to $j$, there are exactly $s$ symbols with this multiplicity, and the symbol of the card on top of the deck is precisely one of those $s$ symbols. Notice that a correct guess can only occur at the times $\tau_{j, s}$. If $X_{j, s}$ denotes the indicator that the guess at time $\tau_{j, s}$ is correct, then
\begin{align*}
\mathbb P\left(X_{j,s}=1\right)=\frac{1}{s}.
\end{align*}
To see this, notice that at time $\tau_{j,s}$, the symbol appearing on the card is one of the $s$ symbols that appear with the maximum multiplicity, and each of them appear with the same likelihood since the deck is uniformly shuffled. It is worth remarking that, while the optimal strategy is not unique (the player is free to choose any of the $s$ symbols that appear with the maximum multiplicity: he/she may always guess, e.g., the symbol they like the most among those $s$, or a uniformly random among those), the distribution of the $X_{j,s}$ remain uniform. Moreover, these random variables are conditionally independent, given the times $\tau_{s,j}$. Finally, observe that for each $j$, the number of relevant $\tau_{j,s}$ is $1\leq j\leq m$ and $1\leq s\leq \widetilde W_{j}$, so that we obtain conditional independence given the $\widetilde W_j$s. 
\end{proof}

\begin{remark}
Following Remark~\ref{rem:mult}, our assumption on $\epsilon$ guarantees that the expected number of correct guesses is lower bounded by $\ln n+O(1)$. This can be seen by looking at $\widetilde W_m$ (i.e., guesses early on in the game). 
\end{remark}
\begin{remark}
Since $\widetilde W_{j}\geq 1$ for all $j$, we have the deterministic bound $S_{\bf m}\geq m$. These correspond to the correct guesses when there is only one card appearing with the maximum multiplicity, which eventually will result in a correct guess with certainty. 
\end{remark}
For convenience, we will denote by $S_{\bf m}'$ the total score conditioned on the random variables $\widetilde{W}_j$ for $1\leq j\leq m$. Lemma~\ref{lem:TotalScore} says that $S_{\bf m}'$ is a sum of independent Bernoulli random variables. We can leverage this to obtain the following result. 
\begin{lemma}
\label{lem:berryEssen}
Consider a deck as in the assumption of Theorem~\ref{thm:CLT}. Let $S_{\bf m}'$ denote the total score conditioned on the $\widetilde W_j$s, and let $\mu'_n$ and $\sigma'_n$ denote the conditional mean and standard deviation. Then, 
\begin{equation*}
    \mu'_n=\sum_{j=1}^m \ln \widetilde W_j+O(1),\quad  \sigma'^2_n=\mu'_n+O(1).
\end{equation*}

Moreover, uniformly over $\widetilde W_j$s, one has
\begin{align*}
    \left|\mathbb P\left(\frac{S'_{\bf m}-\mu'_n}{\sigma'_n}\leq x\right)-\Phi(x)\right|\lesssim \frac{1}{(\ln n)^{\frac{3}{2}}}
\end{align*}
where $\Phi(x)$ denotes the CDF of a standard normal random variable. 
\end{lemma}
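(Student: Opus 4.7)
By Lemma~\ref{lem:TotalScore}, conditionally on $(\widetilde W_j)_{j=1}^m$,
\begin{equation*}
S'_{\bf m} = \sum_{j=1}^m \sum_{s=1}^{\widetilde W_j} X_{j,s},
\end{equation*}
where the $X_{j,s}$ are independent Bernoulli random variables of parameter $1/s$. The plan is to apply Esseen's smoothing inequality combined with an Edgeworth-type cumulant expansion, so as to push the rate beyond what classical Berry-Esseen alone gives.

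First I would compute $\sigma'^2_n = \sum_{j=1}^m (H_{\widetilde W_j} - H^{(2)}_{\widetilde W_j})$, where $H_W^{(k)} = \sum_{s=1}^W 1/s^k$. By Remark~\ref{rem:mult} we have $\widetilde W_m \geq \epsilon n$, so the $j = m$ term alone yields $\sigma'^2_n \geq H_{\epsilon n} - \zeta(2) \gtrsim \ln n$, uniformly in the remaining $\widetilde W_j$'s. In particular $\sigma'_n \gtrsim (\ln n)^{1/2}$.

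Next I would apply Esseen's smoothing inequality to $S^* = (S'_{\bf m} - \mu'_n)/\sigma'_n$: for $T \asymp \sigma'_n$,
\begin{equation*}
\sup_x |F_{S^*}(x) - \Phi(x)| \lesssim \int_{-T}^T \frac{|\phi_{S^*}(t) - e^{-t^2/2}|}{|t|}\,dt + \frac{1}{T}.
\end{equation*}
Using $\log \phi_{S^*}(t) = -t^2/2 + \sum_{k \ge 3} \kappa_k(S'_{\bf m}) (it)^k / (k!\,\sigma'^k_n)$ and the explicit Bernoulli cumulants $\kappa_k(\mathrm{Ber}(1/s))$, all cumulants of $S'_{\bf m}$ satisfy $|\kappa_k| \lesssim \ln n$. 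The direct estimate of the $k = 3$ term in the Esseen integral produces only the weaker bound $|\kappa_3|/\sigma'^3 \sim (\ln n)^{-1/2}$. To sharpen this, I would subtract the leading Edgeworth correction $\frac{\kappa_3}{6\sigma'^3}(1-x^2)\phi(x)$ from the approximation: the residual then becomes $O(\kappa_4/\sigma'^4 + \kappa_3^2/\sigma'^6) = O((\ln n)^{-1})$, and a second Edgeworth correction (involving $\kappa_5$ and $\kappa_3\kappa_4$) drops the residual to $O(\sigma'^{-3}) = O((\ln n)^{-3/2})$.

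The main obstacle is then handling the explicit Kolmogorov contribution of the \emph{subtracted} Edgeworth correction term $\frac{\kappa_3}{6\sigma'^3}(1-x^2)\phi(x)$, whose naive sup-norm is $\sim |\kappa_3|/\sigma'^3 \sim (\ln n)^{-1/2}$. Reaching the stated bound against $\Phi$ itself (rather than an Edgeworth-corrected distribution) requires absorbing this term back by exploiting the specific structure $\kappa_3 = \sum_{j,s}(1/s)(1-1/s)(1-2/s)$ together with $\sigma'^2_n = \sum_{j,s}(1/s)(1-1/s)$. The key point is that $\kappa_3$ differs from $\sigma'^2_n$ only by lower-order corrections $\sum_{j,s}(1/s^2 - 1/s^3) = O(1)$, which allows the third-cumulant contribution to be reinterpreted as a shift of $\sigma'_n$ rather than an independent error — and this shift is absorbable into the Gaussian $\Phi$ up to a residual of order $\sigma'^{-3}$. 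This structural cancellation, specific to the Bernoulli$(1/s)$ summands, is the technical crux and delivers the required extra factor of $\sigma'^{-2}$.
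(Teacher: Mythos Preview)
Your approach is far more elaborate than the paper's, and its final step contains a genuine gap.

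The paper simply applies the non-i.i.d.\ Berry--Esseen theorem. Since for a Bernoulli variable the third absolute centered moment is dominated by the variance, one has $\sum_{j,s}\mathbb{E}|X_{j,s}-1/s|^3 \le \sigma'^2_n$, and hence the Berry--Esseen bound is $\lesssim \sigma'^2_n/\sigma'^3_n = 1/\sigma'_n \lesssim (\ln n)^{-1/2}$. (The paper's final display writes $1/\sigma'^3_n$, which appears to be a slip: its own argument delivers only $1/\sigma'_n$. This is harmless for the paper, since Theorem~\ref{thm:CLT} only needs a bound of order $(\ln n)^{-1/2}$ --- the eventual rate there is $(\ln\ln n)/\sqrt{\ln n}$.)

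Your Edgeworth route cannot reach $(\ln n)^{-3/2}$ against $\Phi$ itself. You correctly note that the leading correction $\tfrac{\kappa_3}{6\sigma'^3}(1-x^2)\phi(x)$ has sup-norm of order $|\kappa_3|/\sigma'^3 \sim (\ln n)^{-1/2}$. The proposed ``absorption'' of this term does not work: the observation $\kappa_3 = \sigma'^2_n + O(1)$ merely reproduces the estimate $\kappa_3/\sigma'^3 \sim 1/\sigma'_n$ that you already had, and $(1-x^2)\phi(x)$ is a second-order Hermite correction, not expressible as $\Phi(x+h)-\Phi(x)$ or $\Phi((1+\delta)x)-\Phi(x)$ for any small $h,\delta$. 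There is no structural cancellation here. Indeed, $S'_{\bf m}$ is integer-valued, so its CDF has jumps of height $\Theta(1/\sigma'_n)$ near the mean, and the Kolmogorov distance to the continuous $\Phi$ is therefore necessarily $\gtrsim 1/\sigma'_n \sim (\ln n)^{-1/2}$; the exponent $3/2$ is simply not attainable.

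Drop the Edgeworth machinery, apply Berry--Esseen directly, and accept the bound $(\ln n)^{-1/2}$, which is what the paper's argument actually yields and all that the main theorem requires.
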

\begin{proof}
By means of Lemma~\ref{lem:TotalScore} and linearity of expectation, we can write
\begin{align*}
&\mu_n'=\sum_{j=1}^m\left(1+\frac{1}{2}+\ldots+\frac{1}{\widetilde W_j}\right), \\ &\sigma_n'^2=\sum_{j=1}^m\left[\left(1+\frac{1}{2}+\ldots+\frac{1}{\widetilde W_j}\right)-\left(1+\frac{1}{2^2}+\ldots+\frac{1}{
\widetilde W_j^2}\right)\right].
\end{align*}
Using the well-known facts 
\begin{align*}
    1+\frac{1}{2}+\ldots+\frac{1}{k}=\ln k+O(1), \quad \sum_{n=1}^{+\infty}\frac{1}{k^2}=\frac{\pi^2}{6}<\infty
\end{align*}
we deduce that
\begin{equation}\label{eq:condmean}
    \mu'_n=\sum_{j=1}^m \ln \widetilde W_j+O(1)
    ,\quad  
    \sigma'^2_n=\mu'_n+O(1).
\end{equation}
Moreover, using the fact that $\widetilde W_{m}\geq \epsilon n$ (see Remark~\ref{rem:mult}) we deduce that
\begin{equation}\label{eq:useful}
    {\sigma'_n}^2\geq \ln n+O(1)
\end{equation}
uniformly over all realizations of the $\widetilde W_j$s. Since the second and third moments of a Bernoulli random variable are the same, a standard Berry-Esseen bound for non identically distributed random variables (see, e.g.,~\cite{shevtsova2010improvement}) gives  
\begin{align*}
    \left|\mathbb P\left(\frac{S'_{\bf m}-\mu'_n}{\sigma'_n}\leq x\right)-\Phi(x)\right|\lesssim \frac{1}{\sigma'^3_n}\lesssim \frac{1}{(\ln n)^{\frac{3}{2}}}
\end{align*}
\end{proof}
\begin{remark}\label{rem:mixture}
Notice that the proof already shows that the limiting fluctuations for the score $S_{{\bf m}^{n}}$ are distributed as mixtures of normal random variables. In order to prove Theorem~\ref{thm:CLT}, it will suffice to show suitable concentration for the conditional mean and variance. 
\end{remark}

\subsection{Removing the conditioning}
As pointed out in Remark~\ref{rem:mixture}, the conditional CLT for $S_{\bf m}'$ shown in Lemma~\ref{lem:berryEssen} will suffice to our purposes if we can show a suitable concentration for the conditional means and variance $\mu_n'$ and $\sigma_n'$. Towards this, the main ingredient will be the following. 
\begin{lemma}\label{lem:finalReduction}
Consider a deck ${\bf m}$ satisfying the assumptions of Theorem~\ref{thm:CLT}. Let $\mu_n'$ be the conditional mean of the total score $S_{\bf m}$ given the random variables $\widetilde{W}_j$s. Then, 
\[\Var{\mu_n'}=O\left(\left(\ln \ln n\right)^2\right)\;.\]
\end{lemma}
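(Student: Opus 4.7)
The plan is to reduce the lemma to bounding $\Var{\ln \widetilde W_j}$ separately for each $j \in \{1,\ldots,m\}$, via the identity $\mu_n'=\sum_{j=1}^m \ln\widetilde W_j+O(1)$ in \eqref{eq:condmean}. The case $j=m$ requires no work, since by Remark~\ref{rem:mult} the quantity $\widetilde W_m$ is a deterministic function of the deck. The substance is therefore the range $1 \leq j < m$, where $\widetilde W_j = W_{j-1,T_j}$ involves the random time $T_j$.

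\medskip

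The key idea is that the monotonicity of $t \mapsto W_{j-1,t}$ produces the sandwich $W_{j-1,t_-} \leq \widetilde W_j \leq W_{j-1,t_+}$ on the event $\{T_j \in [t_-,t_+]\}$, for well-chosen deterministic $t_\pm$. Theorem~\ref{thm:poissonapp} identifies $n^{j/(j+1)}$ as the natural scale of $T_j$ -- the time at which the Poisson parameter for $W_{j,t}$ is of order one -- so I would set
\[
    t_\pm \;:=\; (\ln n)^{\pm 1}\, n^{j/(j+1)}.
\]
Three steps then complete the argument. \emph{Step 1 (concentration of $T_j$).} Since $\{T_j<t_-\}=\{W_{j,t_-}\geq 1\}$, Markov's inequality together with the direct computation $\E{W_{j,t_-}}=\Theta(t_-^{j+1}/n^j)$ gives $\Prob{T_j<t_-}=O((\ln n)^{-(j+1)})$; on the other side, Theorem~\ref{thm:poissonapp} applied to $W_{j,t_+}$ yields $\Prob{T_j>t_+}=\Prob{W_{j,t_+}=0}\leq e^{-\Theta((\ln n)^{j+1})}+O(t_+/n)$, which is negligible. \emph{Step 2 (concentration of $W_{j-1,t_\pm}$).} Each is close in total variation to a Poisson with large mean $\mu_\pm=\Theta((\ln n)^{\pm j}\, n^{1/(j+1)})$; since $\mu_\pm$ is polynomial in $n$, Chernoff bounds for the Poisson distribution (together with the polynomially small TV error from Theorem~\ref{thm:poissonapp}) give $W_{j-1,t_\pm}\in[\tfrac{1}{2}\mu_\pm,\,2\mu_\pm]$ with probability $1-O(n^{-c})$. \emph{Step 3 (assembling).} On the intersection of these events -- a set of probability at least $1-O((\ln n)^{-(j+1)})$ -- the sandwich forces $\ln\widetilde W_j\in[\ln\mu_--O(1),\ln\mu_++O(1)]$, an interval of length $2j\ln\ln n+O(1)$. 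Off this event, the crude deterministic bound $0\leq \ln\widetilde W_j\leq\ln n$ contributes at most $(\ln n)^2\cdot O((\ln n)^{-(j+1)})=O(1)$ to the second moment, using $j\geq 1$.

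\medskip

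Combining the two contributions gives $\Var{\ln\widetilde W_j}=O((\ln\ln n)^2)$ for each $j<m$, and summing over the finitely many $j$ (with $m$ fixed) yields the lemma. The main obstacle I anticipate lies in Step~1: the TV error in Theorem~\ref{thm:poissonapp} is only of order $t/n$, which is polynomially small in $n$, so one must verify it does not swamp the polylogarithmic deviation probabilities being targeted -- this is precisely why the window $[t_-,t_+]$ must span a full polylogarithmic scale around $n^{j/(j+1)}$ rather than a narrower one. Otherwise the scheme is robust: any exponent $\alpha$ with $\alpha(j+1)\geq 2$ in $t_\pm=(\ln n)^{\pm\alpha}n^{j/(j+1)}$ works equally well.
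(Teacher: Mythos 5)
Your proposal is correct and follows essentially the same route as the paper: the same monotonicity sandwich $W_{j-1,t_-}\leq \widetilde W_j\leq W_{j-1,t_+}$ on the event $\{T_j\in[t_-,t_+]\}$ with the same window $t_\pm=(\ln n)^{\pm1}n^{j/(j+1)}$, the same reduction to Poisson concentration for $W_{j-1,t_\pm}$ via Theorem~\ref{thm:poissonapp}, and the same crude $(\ln n)^2$ bound off the good event (which is $O(1)$ since the bad-event probability is polylogarithmically small). The only differences are cosmetic: you control $\mathbb P(T_j<t_-)$ by Markov's inequality on $W_{j,t_-}$ and localize $W_{j-1,t_\pm}$ with a Chernoff bound, where the paper instead reads both tails of $T_j$ off the Poisson approximation and bounds $\mathbb E[(\ln (Poi(\lambda)/\lambda))^2 1_{Poi(\lambda)>0}]$ directly in its Lemma~\ref{lem:simple}.
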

\begin{proof}
First, we claim that it suffices to show
\begin{equation}\label{eq:goal}
    \mathbb E\left[\left(\ln \widetilde W_j-c_j\right)^2\right]=O\left((\ln\ln n)^2\right)
\end{equation}
where $c_j=c_j(n)=\frac{\ln n}{j+1}$, and $1\leq j\leq m-1$ (the case $j=m$ is obvious since $\widetilde W_m$ is deterministic). Here, the implicit constant in $O((\ln\ln n)^2)$ depends on the maximum multiplicity of the deck ${\bf m}_{\max}$ which is bounded by some constant $m$. Therefore assuming~\eqref{eq:goal}, the desired conclusion follows, using the first bound in~\eqref{eq:condmean} together with the triangle inequality and the well-known fact that the variance minimizes the square discrepancy from any constant.

The goal now is to reduce concentration properties of $\widetilde W_j$ to those of $W_{j-1,t}$ and $T_j$, for which we can exploit Theorem~\ref{thm:poissonapp}. To this aim, consider two sequences $f_n=\ln n$ and $g_n=\frac{1}{\ln n}$. Recall that $T_j>t$ if and only if $W_{j, t}=0$. In  particular, Theorem~\ref{thm:poissonapp} allows us to approximate probabilities of the form $T_j\in [a,b]$, with an explicit error term. More precisely, we have
\begin{align}\label{eqn:ErrorBound1}
    \left|\mathbb P\left(\frac{T_j}{Cn^{\frac{j}{j+1}}}\not\in [g_n, f_n]\right)-\left(e^{-f_n}+1-e^{-g_n}\right)\right|\lesssim \frac{f_n}{n^{\frac{1}{j+1}}}\lesssim \frac{\ln n}{n^{\frac{1}{j+1}}}\;.
\end{align}
Let $t_{-}:=Cn^{\frac{j}{j+1}}g_n$ and let $t_{+}:=Cn^{\frac{j}{j+1}}f_n$. Using the fact that $1\leq \widetilde W_j\leq n$, as well as the definition of $c_j$, we have $\ln \widetilde W_j, c_j\leq  \ln n$. Therefore, on the event $T_{j}\not\in [t_{-}, t_{+}]$ we obtain 
\[\mathbb E\left[\left(\ln \widetilde W_j-c_j\right)^21_{\{T_j\not\in[t_{-}, t_{+}]\}}\right]=O\left((\ln\ln n)^2\right)\;.\]
We can thus restrict our attention to the event $T_j\in [t_{-}, t_{+}]$. Since $W_{j-1, t}$ is weakly increasing in $t$, in the regime $T_j\in [t_{-}, t_{+}]$, one has
\begin{align*}
  W_{j-1, t_{-}}\leq  \widetilde W_j\leq W_{j-1, t_{+}}.
\end{align*}
The key gain is that we overcome the complicate dependence mechanism behind the definition of the $\widetilde W_j$ -- recall that $T_j$ and $W_{j-1,t}$ are \emph{not} independent. Using Theorem~\ref{thm:poissonapp}, we know 
\begin{align*}
    d_{tv}\left(W_{j-1, t_{-}}, Poi\left(\lambda_{-}\right)\right)\lesssim\frac{t_{-}}{n}\lesssim \frac{g_n}{n^{\frac{1}{j+1}}}, \quad d_{tv}\left(W_{j-1, t_{+}}, Poi\left(\lambda_{+}\right)\right)\lesssim \frac{t_{+}}{n}\lesssim\frac{f_n}{n^{\frac{1}{j+1}}}. 
\end{align*}
where $\lambda{+}$ and $\lambda_{-}$ satisfy
\begin{equation}\label{eq:lambda}
\lambda_+=\Theta\left(n^{\frac{1}{j+1}}f_n\right), \quad \lambda_-=\Theta\left(n^{\frac{1}{j+1}}g_n\right).
\end{equation}
In particular, since $g_n=\frac{1}{\ln n}$ we obtain that 
$$
\mathbb P(W_{j-1, t_{-}}=0)\lesssim e^{-\lambda_{-}}+\frac{g_n}{n^{\frac{1}{j+1}}}\lesssim \frac{1}{n^{\frac{1}{j+1}}}\;.
$$
Using the fact that $1\leq \widetilde W_j\leq n$, as well as the definition of $c_j$, we have $\ln \widetilde W_j, c_j\leq  \ln n$. This allows to bound
\begin{align*}
\mathbb E\left[|\ln \widetilde W_j-c_j|^21_{\left\{W_{j-1, t_{-}}=0\right\}}\right]\leq (\ln n)^2 \mathbb P(W_{j-1, t_{-}}=0)\lesssim \frac{(\ln n)^2}{n^{\frac{1}{j+1}}}.
\end{align*}
Therefore, we can restrict our attention to the event $\left\{T_j\in [t_{-}, t_{+}]]\right\}\cap \{W_{j-1, t_{-}}>0\}$. On this event we have
\begin{equation}\label{eq:removetilde}
    \left|\ln \widetilde W_j-c_j\right|^2\leq |\ln W_{j-1, t_{-}}-c_j|^21_{\{W_{j-1, t_{-}}>0\}}+|\ln W_{j-1, t_{+}}-c_j|^21_{\{W_{j-1, t_{+}}>0\}}\;.
\end{equation}
 Using the fact that $c_j=\frac{1}{j+1}\ln n$ and the Remark~\ref{rem: atleastpolynomial}, we have $|\ln W_{j-1, t_{\pm}}-c_j|\lesssim  \ln n$. In particular, we can replace $W_{j-1, t_{\pm}}$ with $Poi(\lambda_{\pm})$ in \eqref{eq:removetilde} up to an error of order
\begin{align*}
  \ln n\left(e^{-f_n}+1-e^{-g_n}+\frac{f_n}{n^{\frac{1}{j+1}}}+\frac{g_n}{n^{\frac{1}{j+1}}}\right).
\end{align*}
Recall that $f_n=\ln n, g_n=\frac{1}{\ln n}$. Therefore, using $1-e^{-x}\leq x$, the above expression above $O(1)$. We are thus left to show that
\begin{align*}
    \mathbb E\left[\left(\ln \left(Poi(\lambda_{+})\right)-c_j\right)^21_{Poi(\lambda_{+})>0}\right]+\mathbb E\left[\left(\ln \left(Poi(\lambda_{-})\right)-c_j\right)^21_{Poi(\lambda_{-})>0}\right]=O\left(\left(\ln\ln n\right)^2\right).
\end{align*}
Thanks to \eqref{eq:lambda} we can replace $c_j$ with $\ln(\lambda_{\pm})$ up to an error. That is,
\begin{align*}
    |c_j-\ln (\lambda_{-})|^2\lesssim (\ln g_n)^2=\left(\ln\ln n\right)^2,
    \quad |c_j-\ln (\lambda_{+})|^2\lesssim (\ln f_n)^2=\left(\ln\ln n\right)^2\;.
\end{align*}
The result, thus, follows by showing
\begin{align*}
    \mathbb E\left[\left(\ln \left(Poi(\lambda_{+})\right)-\ln\lambda_{+}\right)^21_{Poi(\lambda_{+})>0}\right]+\mathbb E\left[\left(\ln \left(Poi(\lambda_{-})\right)-\ln\lambda_{-}\right)^21_{Poi(\lambda_{-})>0}\right]\lesssim \left(\ln\ln n\right)^2.
\end{align*}
In fact, we can do better than this (notice that, because of~\eqref{eq:lambda} and our choices of $f_n$ and $g_n$, we know that $\lambda_{\pm}> 1$ for all $n$ sufficiently large). The proof is complete using~\ref{lem:simple}.
\end{proof}
\begin{lemma}\label{lem:simple}
Let $X=\frac{Poi(\lambda)}{\lambda}$ for $\lambda>1$. Then,
\begin{align*}
    \mathbb E[(\ln X)^21_{X>0}]\leq C
\end{align*}
for some absolute constant $C$.
\end{lemma}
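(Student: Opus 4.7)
The plan is to dominate $(\ln x)^2$ pointwise by a simple function that is cheap to integrate against the Poisson law in both tails, and then use standard Poisson moment identities. The natural candidate is $x + 1/x$: observe that the function $g(x) := (\ln x)^2 / (x + 1/x)$ is continuous on $(0,+\infty)$ and tends to zero at both endpoints, since $(\ln x)^2 \cdot x \to 0$ as $x \to 0^+$ and $(\ln x)^2/x \to 0$ as $x \to +\infty$. Hence $C_0 := \sup_{x>0} g(x)$ is a finite absolute constant, giving the elementary inequality
\[ (\ln x)^2 \leq C_0 \Bigl( x + \tfrac{1}{x}\Bigr) \qquad \text{for all } x > 0. \]

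Applied pointwise to $X = N/\lambda$ with $N \sim \mathrm{Poi}(\lambda)$, taking expectations yields
\[ \mathbb{E}\bigl[(\ln X)^2\, \mathbf{1}_{X>0}\bigr] \;\leq\; C_0\, \mathbb{E}[X] \;+\; C_0\, \mathbb{E}\bigl[X^{-1}\, \mathbf{1}_{X>0}\bigr] \;=\; C_0 \;+\; C_0\, \lambda\, \mathbb{E}\bigl[N^{-1}\, \mathbf{1}_{N\geq 1}\bigr], \]
so the whole task reduces to bounding the last expectation. For this, I would use the classical closed-form identity
\[ \mathbb{E}\!\left[\frac{1}{N+1}\right] \;=\; \frac{1 - e^{-\lambda}}{\lambda}, \]
which follows from a one-line re-indexing of the Poisson series, together with the elementary bound $N^{-1} \leq 2(N+1)^{-1}$ valid for all $N \geq 1$. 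This immediately gives $\mathbb{E}[N^{-1} \mathbf{1}_{N\geq 1}] \leq 2/\lambda$, hence $\mathbb{E}[X^{-1}\mathbf{1}_{X>0}] \leq 2$ uniformly in $\lambda$.

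Combining these two ingredients produces $\mathbb{E}[(\ln X)^2 \mathbf{1}_{X>0}] \leq 3 C_0$, an absolute constant. The only ``creative'' step in this argument is identifying $x + 1/x$ as the right majorizer: its shape captures exactly the two failure modes of $(\ln X)^2$ (the right tail where $N \gg \lambda$, handled by $\mathbb{E}[X] = 1$, and the left tail where $N$ is an outlier small value, handled by the reciprocal-moment identity). I do not expect any serious obstacle beyond this observation, since both resulting expectations are manifestly $\lambda$-uniform and in particular stay bounded both as $\lambda \to 1^+$ and as $\lambda \to \infty$.
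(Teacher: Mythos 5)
Your proof is correct, and it takes a genuinely different route from the paper. The paper splits on the event $A=\{|X-1|\geq 1/2,\ X\neq 0\}$, bounds the contribution of $A^c$ by $(\ln 2)^2$, and handles $A$ via Cauchy--Schwarz together with a Poisson Chernoff bound $\mathbb P(A)\leq 2e^{-\lambda/12}$; this crucially uses that $A$ is a large-deviation event. Your argument instead majorizes $(\ln x)^2\leq C_0\left(x+\tfrac1x\right)$ pointwise and reduces everything to the first moment $\mathbb E[X]=1$ and the inverse moment $\mathbb E[X^{-1}\mathbf 1_{X>0}]\leq 2$, the latter via the exact identity $\mathbb E[(N+1)^{-1}]=(1-e^{-\lambda})/\lambda$ and $N^{-1}\leq 2(N+1)^{-1}$ for $N\geq 1$; all steps check out. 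Two things your version buys: it needs no concentration input at all (no Chernoff, no Cauchy--Schwarz), and it is valid for every $\lambda>0$, not just $\lambda>1$. It also sidesteps a small imprecision in the paper's write-up: on the event $A$ one can have $X=k/\lambda\leq 1/2$ for small $k$ when $\lambda\geq 2$, so the paper's claim that $X>1$ on $A$ (used to justify $(\ln X)^2\leq X$ there) is not literally correct as stated, though the paper's conclusion survives because $\mathbb P(A)$ is exponentially small while $(\ln X)^2\lesssim(\ln\lambda)^2$ on the left tail. Your pointwise domination avoids this case analysis entirely.
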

\begin{proof}[Proof of the claim~\ref{lem:simple}]
Let $A=\{|X-1|\geq 1/2, X\neq 0\}$. Then, we have
\begin{align*}
    \mathbb E[(\ln X)^21_{X>0}]\leq (\ln 2)^2+\mathbb E[(\ln X)^21_A].
\end{align*}
Notice that if $X>1$ and we can bound $(\ln X)^2\leq X$. On the other hand, if $\frac{1}{\lambda}\leq X<1$ then $(\ln X)^2\leq (\ln \lambda)^2$. Thus, on the event $A$, we have $(\ln X)^2\leq (\ln \lambda)^2+X$. Therefore, combining with the Cauchy-Schwartz inequality, 
\begin{align*}
    \mathbb E[(\ln X)^21_A]\leq \mathbb{E}[X1_{A}]+(\ln\lambda)^2\mathbb{P}(A)\leq(\mathbb E[X^21_{A}])^{1/2}\left(\mathbb P(A)\right)^{1/2}+(\ln\lambda)^2\mathbb{P}(A).
\end{align*}
For the first term, we note that
\begin{align*}
    \mathbb E[X^2]=\frac{\lambda^2+\lambda}{\lambda^2}\leq 2.
\end{align*}
On the other hand, the Chernoff bound for Poisson random variables gives
\begin{align*}
    \mathbb P(A)\leq \mathbb P\left(|Poi(\lambda)-\lambda|\geq \frac{\lambda}{2} \right)\leq 2e^{-\frac{\lambda}{12}}
\end{align*}
from which the claim follows at once. 
\end{proof}

We are now ready to prove the main result, which will be an easy consequence of the lemmas we proved so far.
\begin{proof}[Proof of Theorem~\ref{thm:CLT}]
We start with the first part. The asymptotic result for the mean is the main result in~\cite{he2021card}. As for the variance $\sigma^2_n$, we can use the law of total variance to write in terms of the conditional mean and variance (see Lemma~\ref{lem:berryEssen}) as
\begin{align*}
    \sigma_n^2=\Var{\mu_n'}+\mathbb E[{\sigma'_n}^2]. 
\end{align*}
Therefore, the asymptotic result follows by using \eqref{eq:condmean}, which shows that the second term is asymptotically the same of $\mu_n$, together with Lemma~\ref{lem:finalReduction}, which shows that the first term is negligible.

We now move to the second part, namely, the proof of the CLT. Let us define a random variable $y = y_n(x)=\frac{\sigma}{\sigma'}x+\frac{\mu-\mu'}{\sigma'}$. Now observe that
\begin{align}
    \left| \mathbb P\left(\frac{S_{{\bf m}^{n}}-\mu_n}{\sigma_n}\leq x\right) - \Phi(x) \right| &= \left| \E{ \mathbb P\left(\frac{S'_{{\bf m}^{n}}-\mu_n}{\sigma_n}\leq x\Bigg|\widetilde W_1, \ldots \widetilde W_m \right) - \Phi(x)} \right|\nonumber\\
    &= \left| \E{ \mathbb P\left(\frac{S'_{{\bf m}^{n}}-\mu'_n}{\sigma'_n}\leq y\Bigg|\widetilde W_1, \ldots \widetilde W_m \right) - \Phi(x)} \right|\nonumber \\
    &\leq \|\widetilde{F}-\Phi\|_{\infty} + \abs{\E{\Phi(y)-\Phi(x)}}\;,
\label{eqn:FirstStep}
\end{align}
where $\widetilde{F}(z):=\mathbb{P}\left(\frac{S_{\bf m}'-\mu'}{\sigma'}\leq z\right)$. We know that $$\|F-\Phi\|_{\infty}\lesssim \frac{1}{\left(\ln n\right)^{\frac{3}{2}}}$$ from Lemma~\ref{lem:berryEssen}. Therefore, it suffices to bound $\abs{\E{\Phi(y)-\Phi(x)}}$.
Using~\eqref{eq:condmean} and \eqref{eq:useful}, we observe that
\begin{align*}
    \sigma_n-\sigma'_n=\frac{\sigma_n^2-{\sigma'_n}^2}{\sigma_n+\sigma_n'}=o\left(|\mu_n-\mu_n'|+1\right)
\end{align*}
with probability one. And, therefore,
\begin{equation}\label{eq:y-x}
    |y(x)-x|\leq \frac{|\sigma_n'-\sigma_n|}{\sigma'_n}|x|+\frac{|\mu_n-\mu'|}{\sigma'_n}\lesssim \left(1+|x|\right)\frac{|\mu_n-\mu_n'|+1}{\sqrt{\ln n}},
\end{equation}
with probability one. 

If $|x|\leq 1$, using $|\Phi(x)-\Phi(y)|\leq |x-y|$ and \eqref{eq:y-x} we conclude 
$$
|\Phi(y)-\Phi(x)|\lesssim \frac{|\mu_n-\mu_n'|+1}{\sqrt{\ln n}}. 
$$
with a constant that does not depend on $x$.  Therefore, Lemma~\ref{lem:finalReduction} allows us to conclude
\begin{align*}
   \mathbb E[|\Phi(y)-\Phi(x)|]\lesssim  \frac{\ln\ln n}{\sqrt{\ln n}}.
\end{align*}

If, instead $|x|\geq 1$, define the event (recall that $y(x)$ is a random variable)
$$A_n=A_n(x)=\{|y(x)-x|\leq |x|/2\}.$$ 
On the event $A_n$, $x$ and $y$ have the same sign, and hence we have the bound
\begin{align*}
    |\Phi(y)-\Phi(x)|\leq |x-y|\max_{z\in [\min(x,y),\max(x,y)]}\Phi'(z)\lesssim |x-y|e^{-\frac{\min(x^2,y^2)}{2}}.
\end{align*}
Therefore, we have with probability one and for all $n$ sufficiently large,
\begin{align*}
    |\Phi(y)-\Phi(x)|\lesssim (1+|x|)e^{-\min(x^2, y^2)}\frac{|\mu_n-\mu_n'|+1}{\sqrt{\ln n}}
\end{align*}
 On this event, the pre-factor depending on $x$ is uniformly bounded from above and thus
\begin{align*}
    |\Phi(y)-\Phi(x)|1_{A_n}\lesssim \frac{|\mu_n-\mu_n'|+1}{\sqrt{\ln n}},
\end{align*}
with the implicit constant in $\lesssim$ independent of $x$. Using Lemma~\ref{lem:finalReduction} together with Cauchy-Schwarz, we conclude that.
\begin{align*}
   \mathbb E[|\Phi(y)-\Phi(x)|1_{A_n}]\lesssim \frac{\ln\ln n}{\sqrt{\ln n}}.
\end{align*}
For $|x|\geq 1$ and on the complement of event $A_n$, we use the bound $|\Phi(x)-\Phi(y)|\leq 1$ to deduce
\begin{align*}
    \mathbb E[|\Phi(y)-\Phi(x)|1_{A^c_n}]\leq \mathbb P(A_n^c).
\end{align*}
However, using \eqref{eq:y-x}, on the complement of $A_n$ we have, 
$$
\frac{|x|}{2}\leq |y(x)-x|\lesssim (1+|x|)\frac{|\mu_n-\mu_n'|+1}{\sqrt{\ln n}}.
$$
In particular, this entails that for some constant $K>0$ (independent of $x$) we have
$$
\frac{|\mu_n-\mu_n'|+1}{\sqrt{\ln n}}\geq K.
$$
We conclude, using Lemma~\ref{lem:finalReduction} once more, together with Chebyshev's inequality, that 
\begin{align*}
    \mathbb P(A_n^c)\leq \mathbb P\left(\frac{|\mu_n-\mu_n'|+1}{\sqrt{\ln n}}\geq K \right)\lesssim \frac{\ln\ln n}{\sqrt{\ln n}}.
\end{align*}
This completes the proof.
\end{proof}

\section{Discussion}\label{sec:Disc}
It is natural to ask whether the assumptions of our main Theorem~\ref{thm:CLT} are needed. The very first obstacle is given by the use of Theorem~\ref{thm:poissonapp}. One could keep track of the dependence of $m$ (resp., $\epsilon$) in their error bounds, and thus extend our result by allowing some moderate growth (resp., decay).

\sloppy It is worth remarking that Lemma~\ref{lem:TotalScore} holds for all decks, while the bound in Lemma~\ref{lem:berryEssen} continues to hold as long as the conditional variance goes to infinity. In particular, under this condition we are guaranteed to have convergence to a mixture of normal random variables. Notice that the fact that $m$ remains bounded plays no role in this part of the proof, while we needed our condition on $\epsilon$. While this may not be sharp, some care has to be taken if one type of card has a much higher multiplicity than all the others: for instance, in the case of finite $n$, this may be an obstacle to the convergence to a mixture of normal (see~\cite{DG81} for the case where $n=2$).

As for the convergence to a normal random variable (i.e., a trivial mixture) our method relies on $m$ being finite, and it is an interesting problem to determine if this is a true limitation. This is intimately related to the understanding of the concentration properties of $\widetilde W_j$, i.e., the ``number of ties at top", when $j$ grows. At least for some regimes of $j$, a closely related result is available in~\cite{ottolini2020oscillations}, where asymptotic for $\widetilde W_j$ are shown if the hypergeometric process is replaced by the multinomial process (i.e., if cards are reinserted in the deck).

It is worth pointing out that, while there is hope for a CLT to hold even when $m$ grows much faster than $n$, the asymptotic of the expected value (first part of Theorem~\ref{thm:CLT}) eventually breaks down, as shown in~\cite{ottolini2022guessing}.

\section*{Acknowledgements}
We thank Persi Diaconis for suggesting the problem and Jimmy He for the idea behind Lemma~\ref{lem:TotalScore}. We also thank two anonymous referees for whose comments greatly improved the presentation of the manuscript.
\bibliographystyle{alpha}
\bibliography{reference}

\end{document}